\documentclass[12pt, reqno]{amsart}
\usepackage{amsmath, amsthm, amscd, amsfonts, amssymb, graphicx, color}
\usepackage[bookmarksnumbered, colorlinks, plainpages]{hyperref}

\input{mathrsfs.sty}

\textheight 22.5truecm \textwidth 15.5truecm
\setlength{\oddsidemargin}{0.35in}\setlength{\evensidemargin}{0.35in}

\setlength{\topmargin}{-.5cm}

\newtheorem{theorem}{Theorem}[section]
\newtheorem{lemma}[theorem]{Lemma}
\newtheorem{proposition}[theorem]{Proposition}
\newtheorem{corollary}[theorem]{Corollary}
\theoremstyle{definition}

\newtheorem{example}[theorem]{Example}

\theoremstyle{remark}

\numberwithin{equation}{section}
\begin{document}

\title [Some generalized numerical radius inequalities ]{Some generalized  numerical radius inequalities involving  Kwong functions }

\author[M. Bakherad]{ Mojtaba Bakherad}

\address{ Department of Mathematics, Faculty of Mathematics, University of Sistan and Baluchestan, Zahedan, I.R.Iran.}

\email{mojtaba.bakherad@yahoo.com; bakherad@member.ams.org}

\subjclass[2010]{Primary 47A12,  Secondary 47A30, 47A63.}

\keywords{Numerical radius, Hadamard product, Operator monotone, Kwong function.}
\begin{abstract}
We prove several numerical radius inequalities involving positive semidefinite matrices via the Hadamard product and Kwong functions. Among other inequalities, it is shown that if   $X$ is a arbitrary $n\times n$ matrix and $A,B$ are positive semidefinite, then
  \begin{align*}
\omega(H_{f,g}(A))\leq k\, \omega(AX+XA),
 \end{align*}
which is equivalent to
\begin{align*}
\omega\big(H_{f,g}(A,B)&\pm H_{f,g}(B,A)\big)\\&\leq k'\,\left\{\omega((A+B)X+X(A+B))+\omega((A-B)X-X(A-B))\right\},
\end{align*}
 where  $f$ and $g$ are two continuous functions on $(0,\infty)$ such that $h(t)={f(t)\over g(t)}$ is Kwong, $k=\max\left\{{f(\lambda)g(\lambda)\over \lambda}: {\lambda\in\sigma(A)}\right\}$ and $k'=\max\left\{{f(\lambda)g(\lambda)\over \lambda}: {\lambda\in\sigma(A)\cup\sigma(B)}\right\}$.
\end{abstract} \maketitle
\section{Introduction}
\noindent Let $\mathcal{M}_n$ be the $C^*$-algebra of all
$n\times n$ complex matrices and  $\langle\,\cdot\,,\,\cdot\,\rangle$ be the standard scalar
product in $\mathbb{C}^n$. A capital letter means an $n\times n$ matrix in  $\mathcal{M}_n$.  For Hermitian matrices $A$ and $B$, we write  $A\geq 0$ if $A$ is positive semidefinite,
$A>0$ if $A$ is positive definite, and $A\geq B$ if $A-B\geq0$.  The numerical radius of $A\in \mathcal{M}_n$ is defined by
\begin{align*}
\omega(A):=\sup\{\mid \langle Ax, x\rangle\mid : x\in \mathbb{C}^n, \parallel x \parallel=1\}.
\end{align*}
It is well known that $\omega(\,\cdot\,)$ defines a norm on $\mathcal{M}_n$, which is equivalent to the usual operator norm $\| \,\cdot\, \|$. In fact, for any $A\in \mathcal{M}_n$, $\frac{1}{2}\| A \|\leq \omega(A) \leq\| A \|$; see \cite{gof}. For
further information about numerical radius inequalities we refer the reader to \cite{gof, YAM}
and references therein. We use the notation $J$ for the matrix whose entries are equal to one.

 The  Hadamard product (Schur product) of two matrices $A, B\in\mathcal{M}_n$  is the
matrix $A\circ B$ whose $(i, j)$ entry is $a_{ij}b_{ij}\,\,(1\leq i,j \leq n)$.
 The Schur multiplier  operator $S_A$ on  $\mathcal{M}_n$ is defined by $S_A(X)=A\circ X\,\,(X\in\mathcal{M}_n)$.
 The induced norm of $S_A$ with respect to the numerical radius norm will be denoted by
 \begin{align*}
 \|S_A\|_\omega=\sup_{X\not=0}{\omega(S_A(X))\over \omega(X)}=\sup_{X\not=0}{\omega(A\circ X)\over \omega(X)}.
\end{align*}
 A continuous real valued function $f$ on an interval $(a,b)\subseteq \mathbb{R}$ is called operator monotone  if $A\leq B$ implies $f(A)\leq f(B)$  for all Hermitian matrices $A, B\in\mathcal{M}_n$ with spectra in $(a,b)$. Following \cite{aud}, a continuous real-valued function $f$ defined on an interval $(a, b)$ with $a>0$ is called a Kwong function if the matrix $K_f=\left({f(\lambda_i)+f(\lambda_j)\over \lambda_i+\lambda_j}\right)_{i,j=1,2,\cdots,n}$
is positive semidefinite for any (distinct) $\lambda_1, \cdots, \lambda_n$   in $(a, b)$. It is easy to see that if $f$ is a nonzero Kwong function, then $f$ is positive and ${1\over f}$ is Kwong. Kwong \cite{kwong1} showed that the set of all Kwong functions on  $(0,\infty)$ is a closed cone and includes all non-negative operator monotone functions on $(0,\infty)$. Also, Audenaert \cite{aud} gave a characterization of Kwong functions by showing that, for given $0\leq a <b$, a function $f$ on an interval $(a, b)$ is Kwong if and only if the function $g(x) =\sqrt{x}f(\sqrt{x})$ is operator monotone on $(a^2,b^2)$.

The Heinz means are defined as $H_{\nu}(a, b)=\frac {a^{1-\nu}b^{\nu}+a^{\nu}b^{1-\nu}}{2}$
for $a, b>0$ and $0\leq \nu \leq 1$. These interesting means interpolate between the
geometric and arithmetic means. In fact, the Heinz inequalities assert that $\sqrt{ab}\leq H_{\nu}(a, b)\leq\frac{a+b}{2}$, where $a, b>0$ and $0\leq \nu \leq 1$. There have been obtained several Heinz type inequalities for Hilbert space
operators and matrices; see \cite{Kaur} and references therein.\\ For two continuous functions $f$ and $g$  on $(0,\infty)$ we denote
 \begin{align*}
H_{f,g}(A,B)=f(A)Xg(B)+g(A)Xf(B) \quad\textrm{and}\quad H_{f,g}(A)=f(A)Xg(A)+g(A)Xf(A),
 \end{align*}
 where  $A,B, X\in\mathcal{M}_n$ such that $A, B$ are positive semidefinite. In particular,
$f(t)=t^\alpha$ and $g(t)=t^{1-\alpha}\,(\alpha\in[0,1])$
\begin{align*}
H_\alpha(A,B)=A^{\alpha}XB^{1-\alpha}+A^{1-\alpha}XB^{\alpha}\quad\textrm{and}\quad
H_\alpha(A)=A^{\alpha}XA^{1-\alpha}+A^{1-\alpha}XA^{\alpha}.
 \end{align*}
A norm $|||\,\cdot\,|||$ on $\mathcal{M}_n$ is called  unitarily invariant if
$|||UAV|||=|||A|||$ for all $A\in\mathcal{M}_n$ and all unitary matrices $U, V\in\mathcal{M}_n$. Let $A, B, X\in\mathcal{M}_n$ such that $A$ and $B$ are positive semidefinite.
 In \cite{ham} it was conjectured a general norm inequality of the Heinz inequality $|||H_{f,g}(A,B)|||\leq\,|||AX+XB|||$,
  where $f$ and $g$ are two continuous functions on $(0,\infty)$ such that $f(t)g(t)\leq t$ and the function $h(t)={f(t)\over g(t)}$ is Kwong. In particular, if $f(t)=t^\alpha$ and $g(t)=t^{1-\alpha}\,(\alpha\in[0,1])$, then we state a Heinz type inequality $|||H_\alpha(A,B)|||\leq\,|||AX+XB|||,$
  where  $A,B, X\in\mathcal{M}_n$ such that $A, B$ are positive semidefinite. For further information, we refer the reader to \cite{bakh,bat} and references therein.

   The numerical radius $\omega(\,\cdot\,)$ is a weakly unitarily invariant   norm on $\mathcal{M}_n$, that is $\omega(U^*AU)=\omega(A)$ for every  $A\in\mathcal{M}_n$ and every unitary $U\in\mathcal{M}_n$. In \cite{agha}, the authors proved a Heinz type inequality for the numerical radius as follows
  \begin{align}\label{main1}
\omega(H_\alpha(A))\leq \, \omega(AX+XA),
 \end{align}
 in which $A, X\in\mathcal{M}_n$ such that $A$ is positive semidefinite. They also showed that the inequality
 $\omega(H_\alpha(A,B))\leq \, \omega(AX+XB)$
   is not true in general.

Our research aim  is to show some numerical radius inequalities  via the Hadamard product and Kwong functions. By using some ideas of \cite{fuji, fuji2} and \cite{ham}, we obtain some extensions and generalizations of inequality \eqref{main1}, which are generalizations of a Hienz type inequality for the numerical radius. For instance, we prove if $A,  X\in\mathcal{M}_n$ such that $A$ is positive semidefinite, then
 \begin{align*}
\omega(H_{f,g}(A))\leq k\, \omega(AX+XA),
 \end{align*}
where $f$ and $g$ are two continuous functions on $(0,\infty)$ such that ${f(t)\over g(t)}$ is Kwong and $k=\max\left\{{f(\lambda)g(\lambda)\over \lambda}:\lambda\in\sigma(A)\right\}$.

\section{main results}
For our purpose we need the following lemmas.

\begin{lemma}\cite[Theorem 3.4]{Zhang1}\label{shour}
$($Spectral Decomposition$)$ Let $A\in\mathcal{M}_n$ with eigenvalues
$\lambda_1, \cdots, \lambda_n$. Then $A$ is normal if and
only if there exists a unitary matrix $U$ such that
\begin{align*}
U^*AU={\rm diag}(\lambda_1, \cdots, \lambda_n).
 \end{align*}
 In particular, $A$ is positive definite if and only if the $\lambda_j\,\,(1\leq j \leq n)$ are positive.
\end{lemma}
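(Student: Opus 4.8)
The plan is to prove the two implications of the equivalence separately, using Schur's unitary triangularization as the only nontrivial ingredient, and then to read off the assertion about positive definiteness.

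First I would treat the easy direction. Assume $U^{*}AU=D:={\rm diag}(\lambda_1,\dots,\lambda_n)$ for some unitary $U$, so that $A=UDU^{*}$. Since any two diagonal matrices commute, $D^{*}D=DD^{*}$, and hence $A^{*}A=UD^{*}DU^{*}=UDD^{*}U^{*}=AA^{*}$; thus $A$ is normal. Moreover $A$ and $D$ are similar, so the diagonal entries $\lambda_1,\dots,\lambda_n$ are precisely the eigenvalues of $A$.

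For the converse I would start from Schur's theorem: every $A\in\mathcal{M}_n$ is unitarily similar to an upper triangular matrix $T=U^{*}AU$ whose diagonal entries are the eigenvalues of $A$ (proved by induction on $n$, taking a unit eigenvector of $A$ as the first column of a unitary matrix). If $A$ is normal, then so is $T$, because $T^{*}T=U^{*}A^{*}AU=U^{*}AA^{*}U=TT^{*}$. The key step is to show that a normal upper triangular matrix is diagonal: comparing $(1,1)$ entries gives $|t_{11}|^{2}=(T^{*}T)_{11}=(TT^{*})_{11}=\sum_{j=1}^{n}|t_{1j}|^{2}$, which forces $t_{1j}=0$ for all $j\geq 2$; deleting the first row and column leaves a smaller normal upper triangular matrix, and an induction down the rows shows that every superdiagonal entry of $T$ vanishes, i.e.\ $T={\rm diag}(\lambda_1,\dots,\lambda_n)$.

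Finally, for the statement about positive definiteness: if $A>0$ then $A$ is Hermitian, hence normal, so by the above $A=UDU^{*}$ with $D$ real and diagonal; writing $y=U^{*}x$ we obtain $\langle Ax,x\rangle=\langle Dy,y\rangle=\sum_{j=1}^{n}\lambda_j|y_j|^{2}$, and since $x\mapsto U^{*}x$ is a bijection of $\mathbb{C}^n$, this quantity is positive for every nonzero $x$ if and only if every $\lambda_j>0$ (take $y=e_j$ for one direction, sum over $j$ for the other). I expect the only genuinely nontrivial points to be the appeal to Schur's triangularization and the normal-plus-upper-triangular-implies-diagonal step; everything else is a direct computation.
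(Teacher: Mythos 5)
Your argument is correct. Note that the paper does not prove this lemma at all: it is quoted verbatim from Zhang's \emph{Matrix Theory} (Theorem 3.4 there) and used as a black box, so there is no proof in the paper to compare against. What you give is the standard textbook argument --- the trivial direction by direct computation, the converse via Schur's unitary triangularization plus the observation that a normal upper triangular matrix is diagonal (your entrywise comparison of $(T^{*}T)_{11}$ and $(TT^{*})_{11}$ and downward induction is exactly right), and the positive-definiteness statement read off from $\langle Ax,x\rangle=\sum_{j}\lambda_j|y_j|^{2}$. The only presentational point worth tightening is the ``if'' half of the last claim: starting from positive eigenvalues you should invoke the normality hypothesis (so that the unitary diagonalization with real diagonal exists and $A$ is Hermitian) before running the quadratic-form computation; this is implicit in your setup but deserves a word, since a non-normal matrix with positive eigenvalues need not be positive definite.
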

\begin{lemma}\cite[Corollary 4]{Okubo}\label{Okubo1}
Let  $A=[a_{ij}]\in\mathcal{M}_n$ be positive semidefinite. Then
\begin{align*}
\|S_A\|_\omega=\max_i a_{ii}.
 \end{align*}
\end{lemma}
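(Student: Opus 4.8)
The plan is to establish the two inequalities $\|S_A\|_\omega \ge \max_i a_{ii}$ and $\|S_A\|_\omega \le \max_i a_{ii}$ separately, the first being an easy test-matrix computation and the second requiring a structural factorization of the Hadamard product.

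For the lower bound I would fix an index $k$ with $a_{kk} = \max_i a_{ii}$ and test the Schur multiplier against the single matrix unit $X = E_{kk}$ (the matrix with $1$ in the $(k,k)$ entry and $0$ elsewhere). Since the Hadamard product kills every off-diagonal position of $E_{kk}$, one has $A \circ E_{kk} = a_{kk}\,E_{kk}$, while a direct evaluation gives $\omega(E_{kk}) = \sup_{\|x\|=1}|x_k|^2 = 1$. By absolute homogeneity of the numerical radius this yields $\omega(A\circ E_{kk})/\omega(E_{kk}) = a_{kk} = \max_i a_{ii}$, and taking the supremum over all $X\neq 0$ gives $\|S_A\|_\omega \ge \max_i a_{ii}$.

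The substantive direction is the reverse inequality, and the first step is to secure a convenient factorization. Because $A\ge 0$, Lemma \ref{shour} supplies a unitary $U$ with $U^*AU = \mathrm{diag}(\lambda_1,\dots,\lambda_n)$ and all $\lambda_j\ge 0$; writing $w_j = \sqrt{\lambda_j}\,u_j$ for the scaled columns $u_j$ of $U$ gives the rank-one expansion $A = \sum_j w_j w_j^*$, so that $a_{pq} = \sum_j (w_j)_p\,\overline{(w_j)_q}$ and in particular $a_{pp} = \sum_j |(w_j)_p|^2$. Introducing the diagonal matrices $D_j = \mathrm{diag}(w_j)$, an entrywise check yields the key identity
\[
A \circ X = \sum_j D_j\, X\, D_j^*.
\]
With this in hand I would fix a unit vector $x$, set $y_j = D_j^* x$, and write $\langle (A\circ X)x,x\rangle = \sum_j \langle X y_j, y_j\rangle$. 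Estimating each term by the numerical radius gives $|\langle X y_j,y_j\rangle| \le \|y_j\|^2\,\omega(X)$ (the term vanishing when $y_j=0$, and following from normalizing $y_j$ otherwise), and summing, $\sum_j\|y_j\|^2 = \sum_p |x_p|^2\sum_j |(w_j)_p|^2 = \sum_p a_{pp}|x_p|^2 \le (\max_i a_{ii})\,\|x\|^2 = \max_i a_{ii}$. Hence $|\langle (A\circ X)x,x\rangle| \le (\max_i a_{ii})\,\omega(X)$ for every unit $x$, which is exactly $\omega(A\circ X)\le (\max_i a_{ii})\,\omega(X)$, giving $\|S_A\|_\omega \le \max_i a_{ii}$ and closing the argument.

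I expect the main obstacle to be discovering and verifying the representation $A \circ X = \sum_j D_j X D_j^*$; once it is available the decisive accounting step is recognizing that the sum $\sum_j \|D_j^* x\|^2$ collapses, via $a_{pp} = \sum_j |(w_j)_p|^2$, back into the diagonal entries of $A$ weighted by $|x_p|^2$. This collapse is precisely what upgrades the crude term-by-term numerical-radius estimate into the sharp constant $\max_i a_{ii}$, and it is also what forces the positive semidefiniteness hypothesis, since the nonnegativity of the weights $a_{pp}$ and the Gram structure of $A$ are both used.
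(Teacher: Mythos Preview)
Your argument is correct. The lower bound via the matrix unit $E_{kk}$ is immediate, and for the upper bound the factorization $A\circ X=\sum_j D_j X D_j^{*}$ with $D_j=\operatorname{diag}(w_j)$ (coming from $A=\sum_j w_j w_j^{*}$) is precisely the right tool; the collapse $\sum_j\|D_j^{*}x\|^{2}=\sum_p a_{pp}|x_p|^{2}\le \max_i a_{ii}$ then gives the sharp constant.

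There is nothing to compare against in the present paper: the lemma is quoted without proof from Ando and Okubo \cite{Okubo}, so the paper offers no argument of its own. Your proof is in fact the standard one and is essentially the route taken in the cited source, where the representation of the Schur multiplier as a sum of diagonal congruences is the main device. One minor remark: invoking Lemma~\ref{shour} is not strictly necessary, since any Gram factorization $A=W^{*}W$ (equivalently any decomposition $A=\sum_j w_j w_j^{*}$) suffices and already encodes the positive semidefiniteness hypothesis; but using the spectral decomposition is perfectly fine.
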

\begin{lemma}\label{kit}\cite{HKSH}
Let $X, Y\in\mathcal{M}_n$. Then
\\
$({\rm i})\,\,\omega\left(\left[\begin{array}{cc}
 X&0\\
 0&Y
 \end{array}\right]\right)$
  $= \max \{{\omega(X), \omega(Y)}\};$
\\
$({\rm ii})\,\,{\max\left(\omega(X+Y),\omega(X-Y)\right)\over2} $ $ \leq \omega\left(\left[\begin{array}{cc}
 0&X\\
 Y&0
 \end{array}\right]\right)\leq {\omega(X+Y)+\omega(X-Y)\over2}.$

\end{lemma}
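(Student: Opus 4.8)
The plan is to derive both parts directly from the definition of $\omega$ together with two structural facts that the paper already grants: $\omega$ is a norm (so the triangle inequality and $\omega(\lambda M)=\omega(M)$ for $|\lambda|=1$ are available) and $\omega$ is weakly unitarily invariant. For part (i), I write a unit vector $z\in\mathbb{C}^{2n}$ as $z=\binom{u}{v}$ with $\|u\|^2+\|v\|^2=1$ and compute
\[
\left\langle \begin{pmatrix} X & 0 \\ 0 & Y\end{pmatrix} z,\, z\right\rangle = \langle Xu,u\rangle + \langle Yv,v\rangle .
\]
Taking moduli, the triangle inequality and the definition of $\omega$ bound this by $\omega(X)\|u\|^2+\omega(Y)\|v\|^2\le\max\{\omega(X),\omega(Y)\}$, giving the upper bound. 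The reverse inequality follows by testing on the vectors $\binom{u}{0}$ and $\binom{0}{v}$, which recover $\omega(X)$ and $\omega(Y)$; hence equality.

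For part (ii), set $S=\begin{pmatrix} 0 & X \\ Y & 0\end{pmatrix}$ and split it as $S=A+B$, where
\[
A=\tfrac12\begin{pmatrix} 0 & X+Y \\ X+Y & 0\end{pmatrix}, \qquad B=\tfrac12\begin{pmatrix} 0 & X-Y \\ -(X-Y) & 0\end{pmatrix}.
\]
The key computational step is to evaluate $\omega(A)$ and $\omega(B)$ via unitary equivalences. Conjugating $\begin{pmatrix} 0 & Z \\ Z & 0\end{pmatrix}$ by the Hermitian unitary $U=\tfrac{1}{\sqrt2}\begin{pmatrix} I & I \\ I & -I\end{pmatrix}$ produces $\begin{pmatrix} Z & 0 \\ 0 & -Z\end{pmatrix}$, so by part (i) its numerical radius is $\omega(Z)$; with $Z=X+Y$ this gives $\omega(A)=\tfrac12\omega(X+Y)$. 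For $B$, conjugating $\begin{pmatrix} 0 & W \\ -W & 0\end{pmatrix}$ by $\mathrm{diag}(I,iI)$ yields $i\begin{pmatrix} 0 & W \\ W & 0\end{pmatrix}$, whose numerical radius equals that of $\begin{pmatrix} 0 & W \\ W & 0\end{pmatrix}$ by unimodular invariance; with $W=X-Y$ this gives $\omega(B)=\tfrac12\omega(X-Y)$. The upper bound is then immediate from the triangle inequality, $\omega(S)\le\omega(A)+\omega(B)=\tfrac12\bigl(\omega(X+Y)+\omega(X-Y)\bigr)$.

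For the lower bound the crucial observation is a swap symmetry: conjugating $S$ by the unitary $P=\begin{pmatrix} 0 & I \\ I & 0\end{pmatrix}$ gives $P^*SP=\begin{pmatrix} 0 & Y \\ X & 0\end{pmatrix}=A-B$, so weak unitary invariance forces $\omega(A-B)=\omega(S)=\omega(A+B)$. Writing $2A=(A+B)+(A-B)$ and $2B=(A+B)-(A-B)$ and applying the triangle inequality then yields $2\omega(A)\le\omega(A+B)+\omega(A-B)=2\omega(S)$ and likewise $2\omega(B)\le 2\omega(S)$. Substituting the values of $\omega(A)$ and $\omega(B)$ gives $\omega(X+Y)/2\le\omega(S)$ and $\omega(X-Y)/2\le\omega(S)$, that is, $\max\{\omega(X+Y),\omega(X-Y)\}/2\le\omega(S)$, which completes part (ii).

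I expect the only delicate point to be verifying the two conjugation identities cleanly, since everything else reduces to the triangle inequality once $\omega(A)$ and $\omega(B)$ are identified. The conceptual heart of the argument is the pair of symmetries $\omega(A+B)=\omega(A-B)=\omega(S)$ coming from $P$: this equality is exactly what lets both the $X+Y$ and the $X-Y$ bounds fall out simultaneously, and it is the ingredient that makes the lower bound more than a routine norm estimate.
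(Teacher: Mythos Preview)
Your proof is correct. Note, however, that the paper does not actually prove this lemma: it is stated with a citation to Hirzallah, Kittaneh, and Shebrawi and used as a black box, so there is no in-paper argument to compare against. What you have written is essentially the standard proof from that source: part~(i) directly from the definition of $\omega$, and part~(ii) via the decomposition $S=A+B$ together with the unitary $U=\tfrac{1}{\sqrt2}\bigl(\begin{smallmatrix} I & I \\ I & -I\end{smallmatrix}\bigr)$ to identify $\omega(A)$ and $\omega(B)$, plus the swap unitary $P=\bigl(\begin{smallmatrix} 0 & I \\ I & 0\end{smallmatrix}\bigr)$ to obtain the symmetry $\omega(A+B)=\omega(A-B)$ needed for the lower bound. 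All of the conjugation identities you state check out.
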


Now, we are in position to demonstrate the first result of this section  by using some ideas of \cite{fuji, fuji2, ham}.
\begin{theorem}\label{main}
 Let $A,B\in\mathcal{M}_n$ be positive semidefinite, $X\in\mathcal{M}_n$, and let $f$, $g$ be two continuous functions on $(0,\infty)$ such that $h(t)={f(t)\over g(t)}$ is Kwong.  Then
\begin{align}\label{hob1}
\omega(H_{f,g}(A))\leq k\, \omega(AX+XA),
 \end{align}
where  $k=\max_{\lambda\in\sigma(A)}\left\{{f(\lambda)g(\lambda)\over \lambda}\right\}$.\\
Moreover,  inequality \eqref{hob1} is equivalent to the inequality
 \begin{align}\label{hob11}
\omega\big(H_{f,g}(A,B)&\pm H_{f,g}(B,A)\big)\nonumber\\&\leq k'\,\left\{\omega((A+B)X+X(A+B))+\omega((A-B)X-X(A-B))\right\},
 \end{align}
  where $k'=\max_{\lambda\in\sigma(A)\cup\sigma(B)}\left\{{f(\lambda)g(\lambda)\over \lambda}\right\}$.
\end{theorem}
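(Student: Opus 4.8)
The plan is to prove inequality \eqref{hob1} first, by reducing it via the spectral decomposition of $A$ to a Schur-multiplier estimate, and then to derive the equivalence with \eqref{hob11} by the standard $2\times 2$ block-matrix trick of Lemma~\ref{kit}. For the first part, I would diagonalize $A$: by Lemma~\ref{shour} there is a unitary $U$ with $U^*AU=\mathrm{diag}(\lambda_1,\dots,\lambda_n)=\Lambda$, and since $\omega$ is weakly unitarily invariant we may assume $A=\Lambda$ is diagonal with $\lambda_i\in\sigma(A)$ (discarding the zero eigenvalues requires a small continuity/limiting remark, or one restricts $X$ to the range of $A$). Writing $Y=U^*XU$, the entries of $H_{f,g}(A)=f(\Lambda)Yg(\Lambda)+g(\Lambda)Yf(\Lambda)$ are $\big(f(\lambda_i)g(\lambda_j)+g(\lambda_i)f(\lambda_j)\big)y_{ij}$, while $AY+YA$ has entries $(\lambda_i+\lambda_j)y_{ij}$. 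Hence $H_{f,g}(A)=C\circ(AY+YA)$, where $C=(c_{ij})$ with
\[
c_{ij}=\frac{f(\lambda_i)g(\lambda_j)+g(\lambda_i)f(\lambda_j)}{\lambda_i+\lambda_j}.
\]

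The heart of the argument is to show $\omega(C\circ Z)\le k\,\omega(Z)$, i.e. $\|S_C\|_\omega\le k$. The natural route is to factor $C$. Write $c_{ij}=\sqrt{f(\lambda_i)g(\lambda_i)}\,\sqrt{f(\lambda_j)g(\lambda_j)}\cdot d_{ij}$ where
\[
d_{ij}=\frac{1}{\lambda_i+\lambda_j}\left(\sqrt{\tfrac{f(\lambda_i)g(\lambda_j)}{f(\lambda_j)g(\lambda_i)}}+\sqrt{\tfrac{f(\lambda_j)g(\lambda_i)}{f(\lambda_i)g(\lambda_j)}}\right)
=\frac{h(\lambda_i)^{1/2}h(\lambda_j)^{-1/2}+h(\lambda_j)^{1/2}h(\lambda_i)^{-1/2}}{\lambda_i+\lambda_j}.
\]
Setting $\mu_i=h(\lambda_i)^{1/2}$ one sees $d_{ij}=\dfrac{\mu_i/\mu_j+\mu_j/\mu_i}{\lambda_i+\lambda_j}$, and since $h$ is Kwong the rescaled matrix $\big(\tfrac{h(\lambda_i)+h(\lambda_j)}{\lambda_i+\lambda_j}\big)$ is positive semidefinite; conjugating by $\mathrm{diag}(\mu_i^{-1})$ and also $\mathrm{diag}(\mu_i)$ and averaging (or directly, since $D=(d_{ij})$ is a diagonal congruence of $K_h$ added to its "inverse" congruence, both PSD) gives that $D$ is positive semidefinite with all diagonal entries equal to $1$. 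Now $C=\mathrm{diag}(\sqrt{f(\lambda_i)g(\lambda_i)})\,D\,\mathrm{diag}(\sqrt{f(\lambda_i)g(\lambda_i)})$, so $S_C=S_E\circ S_D$ in the sense $C\circ Z=E\circ(D\circ Z)$ where $E=(\sqrt{f(\lambda_i)g(\lambda_i)f(\lambda_j)g(\lambda_j)})$ is rank one PSD with diagonal entries $f(\lambda_i)g(\lambda_i)=\lambda_i\cdot\tfrac{f(\lambda_i)g(\lambda_i)}{\lambda_i}$. Wait—$E$ is not what multiplies cleanly; instead observe directly that $C$ itself is PSD (product of the PSD matrix $D$ by the rank-one PSD $E$ entrywise is PSD by the Schur product theorem) with diagonal entries $c_{ii}=\tfrac{2f(\lambda_i)g(\lambda_i)}{2\lambda_i}=\tfrac{f(\lambda_i)g(\lambda_i)}{\lambda_i}\le k$. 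Then Lemma~\ref{Okubo1} yields $\|S_C\|_\omega=\max_i c_{ii}\le k$, and \eqref{hob1} follows.

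For the equivalence, apply \eqref{hob1} to the $2n\times 2n$ positive semidefinite matrix $\widetilde A=\begin{pmatrix}A&0\\0&B\end{pmatrix}$ and $\widetilde X=\begin{pmatrix}0&X\\\pm X&0\end{pmatrix}$: one computes $H_{f,g}(\widetilde A)$ as a block matrix with off-diagonal blocks $H_{f,g}(A,B)$ and $\pm H_{f,g}(B,A)$ (up to sign), and $\widetilde A\widetilde X+\widetilde X\widetilde A$ similarly off-diagonal with blocks $AX+XB$ and $\pm(BX+XA)$; since $\sigma(\widetilde A)=\sigma(A)\cup\sigma(B)$ the relevant constant is $k'$. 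Then the upper bound in Lemma~\ref{kit}(ii) converts $\omega$ of the off-diagonal $2\times2$ block matrix into $\tfrac12(\omega(\text{sum})+\omega(\text{difference}))$, and a routine rearrangement—writing $AX+XB=\tfrac12((A+B)X+X(A+B))+\tfrac12((A-B)X-X(A-B))$ and similarly for the $B,A$ term—produces exactly the right-hand side of \eqref{hob11} (absorbing the factor, so $k'$ appears as written). Conversely, \eqref{hob11} specialized to $B=A$ collapses the second numerical radius on the right to $0$ and recovers $\omega(2H_{f,g}(A))\le k'\cdot\omega(2AX+2XA)$ with $k'=k$, giving \eqref{hob1}; hence the two are equivalent.

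The main obstacle I anticipate is the positive-semidefiniteness (and diagonal computation) of the matrix $C$: getting the Kwong hypothesis on $h=f/g$ to yield PSD-ness of $\big(\tfrac{f(\lambda_i)g(\lambda_j)+g(\lambda_i)f(\lambda_j)}{\lambda_i+\lambda_j}\big)$ requires the right diagonal rescaling trick (reducing to $K_h\ge0$ via conjugation by $\mathrm{diag}(h(\lambda_i)^{\pm1/2})$ together with the Schur product theorem), and one must handle the zero eigenvalues of $A$ (where $\lambda_i=0$ makes the denominator vanish) by a limiting argument or by passing to the compression onto $\overline{\mathrm{ran}\,A}$. Everything after that—the block-matrix bookkeeping for the equivalence—is routine given Lemma~\ref{kit}.
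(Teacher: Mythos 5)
Your proposal is correct and follows essentially the same route as the paper: diagonalize $A$ by weak unitary invariance, realize $H_{f,g}(A)$ as a Hadamard multiplier $C$ acting on $AX+XA$, prove $C\geq 0$ via a diagonal congruence of the Kwong matrix $K_h$, invoke Lemma~\ref{Okubo1} to get $\|S_C\|_\omega=\max_i c_{ii}\leq k$, and obtain the equivalence with \eqref{hob11} through the $2\times 2$ block substitution and Lemma~\ref{kit}(ii), with the semidefinite case handled (as in the paper) by compressing to the block where $A$ is definite using \cite[Lemma 2.1]{omid}. The only slip is cosmetic: the diagonal entries of your $D$ are $1/\lambda_i$, not $1$, which is harmless since you compute $c_{ii}=f(\lambda_i)g(\lambda_i)/\lambda_i$ directly.
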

\begin{proof}
Assume that $A$ is positive definite. Since
the numerical radius is weakly unitarily invariant, we may assume that A is diagonal
matrix with positive eigenvalues $\lambda_1, \cdots, \lambda_n$.
It follows from ${f\over g}$ is a Kwong function that
 \begin{align*}
Z=[z_{ij}]=\Lambda\left({f(\lambda_i)g^{-1}(\lambda_j)+f(\lambda_j)g^{-1}(\lambda_i)\over \lambda_i+\lambda_j}\right)_{( i,j=1,\cdots, n)}\Lambda
\end{align*}
is positive semidefinite, where $\Lambda={\rm diag}\left(g(\lambda_1),\cdots,g(\lambda_n)\right)$.
 It  follows from  Lemma \ref{Okubo1}  that
 \begin{align*}
\|S_Z\|_\omega=\max_i z_{ii}=\max_i {f(\lambda_i)g(\lambda_i)\over \lambda_i} \leq k
 \end{align*}
 or equivalently, ${\omega(Z\circ X)\over \omega(X)}\leq k\,\,(0\neq X \in\mathcal{M}_n)$. If we put  $E=[{1\over \lambda_i+\lambda_j}]$ and $ F=[f(\lambda_i)g(\lambda_j)+f(\lambda_j)g(\lambda_i)]\in \mathcal{M}_n$, then
  \begin{align*}
\omega(E\circ F \circ X)=\omega(Z\circ X)\leq k\,\omega(X)\qquad(X \in\mathcal{M}_n).
 \end{align*}
  Let  the matrix $C$ be the entrywise inverse of $E$, i.e., $C\circ E=J$. Thus
\begin{align*}
\omega(F\circ X)\leq k\,\omega(C\circ X)\qquad(X \in\mathcal{M}_n)
\end{align*}
   or equivalently
\begin{align}\label{bogh1}
\omega(H_{f,g}(A))=\omega(f(A)Xg(A)+g(A)Xf(A))\leq k\, \omega(AX+XA).
 \end{align}
 Now, if $A$ is positive semidefinite, we may  assume that $A =\left[\begin{array}{cc} A_1&0\\ 0&0\end{array}\right]$, where $A_1\in \mathcal{M}_k\,(k < n)$ is a positive definite matrix. Let $X =\left[\begin{array}{cc} X_1&X_2\\ X_3&X_4\end{array}\right]$,
  where $X_1 \in \mathcal{M}_k$ and $X_4\in\mathcal{M}_{n-k}$. Then we have
  \begin{align}\label{bogh2}
\omega(H_{f,g}(A))\nonumber&=\omega\left(\left[\begin{array}{cc} f(A_1)X_1g(A_1)+g(A_1)X_1f(A_1)&0\\ 0&0\end{array}\right]\right)\nonumber\\&\qquad\qquad\qquad\qquad\qquad(\textrm{by Lemma}\,\ref{kit} ({\rm i}))\nonumber\\&\leq k\,\omega\left(\left[\begin{array}{cc} A_1X_1+X_1A_1&0\\ 0&0\end{array}\right]\right)\qquad(\textrm{by\,} \eqref{bogh1})\nonumber\\&= k\, \omega(A_1X_1+X_1A_1)\qquad(\textrm{by Lemma}\,\ref{kit} ({\rm i}))\nonumber\\&\leq k\, \omega(AX+XA)\qquad(\textrm{by  \,\cite[Lemma 2.1]{omid}}).
 \end{align}
 Hence, we reach   inequality \eqref{hob1}. Moreover, if we replace $A$ and $X$ by $\left(\begin{array}{cc} A&0\\ 0&B\end{array}\right)$ and $\left(\begin{array}{cc} 0&X\\ X&0\end{array}\right)$ in inequality \eqref{hob1}, respectively, then
 \begin{align*}
 \omega&\left(\left[\begin{array}{cc} 0&H_{f,g}(A,B)\\ H_{f,g}(B,A)&0\end{array}\right]\right)
\\&\leq k'\,\omega\left(\left[\begin{array}{cc} 0&AX+XB\\ XA+BX&0\end{array}\right]\right),
 \end{align*}
whence
 \begin{align*}
 &\max\Big\{\omega\big(H_{f,g}(A,B)\pm H_{f,g}(B, A)\big)\Big\}\\&\leq2\,\omega\left(\left[\begin{array}{cc} 0&f(A)Xg(B)+g(A)Xf(B)\\g(B)Xf(A)+f(B)Xg(A)&0\end{array}\right]\right)\\&\qquad\qquad\qquad\qquad\qquad(\textrm{by Lemma}\,\ref{kit} ({\rm ii}))
\\&\leq 2k'\,\omega\left(\left[\begin{array}{cc} 0&AX+XB\\ XA+BX&0\end{array}\right]\right)\qquad(\textrm{by\,} \eqref{bogh2})
\\&\leq k'\,\left(\omega(AX+XB+XA+BX)+\omega(AX+XB-XA-BX)\right)\\& \qquad\qquad\qquad\qquad\qquad(\textrm{by Lemma}\,\ref{kit} ({\rm ii})).
 \end{align*}
 Thus, we have  inequality \eqref{hob11}. Also, if we put $B=A$ in inequality \eqref{hob11}, then we reach inequality \eqref{hob1}.
 \end{proof}
 If we take $f(t)=t^\alpha$ and $g(t)=t^{1-\alpha}$ in Theorem \ref{main} for each $0\leq \alpha\leq 1$, then we get the next result.
\begin{corollary}\cite[Theorem 2.4]{agha}
Let  $A,B\in\mathcal{M}_n$ be positive semidefinite, $X\in\mathcal{M}_n$, and let $0\leq\alpha \leq 1$. Then
\begin{align}\label{hob2}
\omega(H_{\alpha}(A))\leq \, \omega(AX+XA).
 \end{align}
Moreover,  inequality \eqref{hob2} is equivalent to the inequality
 \begin{align*}
\omega\big(H_{\alpha}(A,B)&\pm H_{\alpha}(B,A)\big)\nonumber\\&\leq \,\omega((A+B)X+X(A+B))+\omega((A-B)X-X(A-B)).
 \end{align*}
\end{corollary}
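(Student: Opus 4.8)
The plan is to obtain this result as a direct specialization of Theorem \ref{main}, setting $f(t)=t^{\alpha}$ and $g(t)=t^{1-\alpha}$ for the fixed parameter $\alpha\in[0,1]$. With this choice one has $H_{f,g}(A)=A^{\alpha}XA^{1-\alpha}+A^{1-\alpha}XA^{\alpha}=H_{\alpha}(A)$ and, likewise, $H_{f,g}(A,B)=H_{\alpha}(A,B)$, so the two inequalities in the corollary are exactly \eqref{hob1} and \eqref{hob11} written in the power notation. Thus the entire task reduces to two routine verifications: that the hypotheses of Theorem \ref{main} are met, and that the constants $k$ and $k'$ collapse to $1$.

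The first verification is that $h(t)=f(t)/g(t)=t^{\alpha}/t^{1-\alpha}=t^{2\alpha-1}$ is a Kwong function on $(0,\infty)$. As $\alpha$ ranges over $[0,1]$ the exponent $p=2\alpha-1$ ranges over $[-1,1]$, so I would split into cases. For $p\in[0,1]$ the function $t\mapsto t^{p}$ is a non-negative operator monotone function on $(0,\infty)$, hence Kwong by Kwong's result \cite{kwong1} recalled in the introduction; the case $p=0$ is the constant $1$, which is trivially Kwong. For $p\in[-1,0)$ I would invoke the reciprocal stability of Kwong functions stated in the introduction: since $-p\in(0,1]$, the function $t\mapsto t^{-p}$ is operator monotone and therefore a nonzero Kwong function, whence its reciprocal $t\mapsto t^{p}$ is again Kwong. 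This covers every $\alpha\in[0,1]$.

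The second verification is the evaluation of the constants. Here the crucial arithmetic simplification is $f(t)g(t)=t^{\alpha}\cdot t^{1-\alpha}=t$, so that $\dfrac{f(\lambda)g(\lambda)}{\lambda}=1$ identically for every $\lambda>0$. Consequently
\begin{align*}
k=\max_{\lambda\in\sigma(A)}\left\{\frac{f(\lambda)g(\lambda)}{\lambda}\right\}=1
\qquad\textrm{and}\qquad
k'=\max_{\lambda\in\sigma(A)\cup\sigma(B)}\left\{\frac{f(\lambda)g(\lambda)}{\lambda}\right\}=1.
\end{align*}
Substituting $k=k'=1$ into \eqref{hob1} and \eqref{hob11} yields precisely \eqref{hob2} and its two-variable counterpart, completing the proof.

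I do not expect any serious obstacle, since this is a clean corollary rather than a new argument; the only point requiring a moment of care is the Kwong verification for the negative-exponent range $p\in[-1,0)$, where one cannot appeal to operator monotonicity of $t^{p}$ directly and must instead pass through the reciprocal. Everything else is bookkeeping: matching the $H_{f,g}$ notation to the $H_{\alpha}$ notation and reading off that the power choice forces $f(t)g(t)=t$, which is exactly the feature that makes the multiplicative constants equal to $1$.
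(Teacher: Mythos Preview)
Your proposal is correct and follows exactly the paper's approach: the corollary is obtained by specializing Theorem \ref{main} with $f(t)=t^{\alpha}$ and $g(t)=t^{1-\alpha}$, and you supply more detail than the paper (which simply asserts the specialization) by explicitly checking that $t^{2\alpha-1}$ is Kwong and that $f(t)g(t)=t$ forces $k=k'=1$.
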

\begin{corollary}
Let $A,B\in\mathcal{M}_n$  be positive semidefinite, $X\in\mathcal{M}_n$, and let $f$ be a non-negative operator monotone function on $[0,\infty)$ such that $f'(0)=\lim_{ x\rightarrow 0^+}f'(x)<\infty$ and $f(0)=0$. Then
\begin{align}\label{hob3}
\omega(f(A)X+Xf(A))\leq f'(0) \,\omega(AX+XA).
 \end{align}
 Moreover,  inequality \eqref{hob3} is equivalent to the inequality
\begin{align*}
\omega(X(f(A)+f(&B))+(f(A)+ f(B))X)\nonumber\\&\leq f'(0) \,\Big(\omega((A+B)X+X(A+B))+\omega((A-B)X-X(A-B))\Big).
 \end{align*}
 \end{corollary}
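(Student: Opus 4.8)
The plan is to obtain this Corollary as the special case $g\equiv 1$ of Theorem \ref{main}. First I would check the hypotheses: the constant function $g(t)=1$ is continuous on $(0,\infty)$, and $h(t)=f(t)/g(t)=f(t)$, being a non-negative operator monotone function on $(0,\infty)$, is a Kwong function by Kwong's theorem recalled in the Introduction. For this choice of $g$ one has
\begin{align*}
H_{f,g}(A)=f(A)X+Xf(A),\qquad H_{f,g}(A,B)=f(A)X+Xf(B),\qquad H_{f,g}(B,A)=f(B)X+Xf(A),
\end{align*}
so that $H_{f,g}(A,B)+H_{f,g}(B,A)=(f(A)+f(B))X+X(f(A)+f(B))$.

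The one genuinely analytic point is to replace the constants $k=\max_{\lambda\in\sigma(A)}f(\lambda)/\lambda$ and $k'=\max_{\lambda\in\sigma(A)\cup\sigma(B)}f(\lambda)/\lambda$ coming from Theorem \ref{main} by $f'(0)$. Since the spectra of $A$ and $B$ consist of non-negative reals, it suffices to show $f(\lambda)/\lambda\le f'(0)$ for every $\lambda>0$. Now every operator monotone function on $(0,\infty)$ is concave there, so its derivative $f'$ is non-increasing on $(0,\infty)$; using $f(0)=0$ and the mean value theorem, $f(\lambda)/\lambda=(f(\lambda)-f(0))/\lambda=f'(\xi)$ for some $\xi\in(0,\lambda)$, whence $f(\lambda)/\lambda\le\lim_{x\to 0^+}f'(x)=f'(0)$. (Equivalently, $\lambda\mapsto f(\lambda)/\lambda$ is non-increasing on $(0,\infty)$ with supremum $f'(0)$.) Consequently $k\le f'(0)$ and $k'\le f'(0)$.

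Given these observations, inequality \eqref{hob3} is immediate from \eqref{hob1}, since $\omega(f(A)X+Xf(A))=\omega(H_{f,g}(A))\le k\,\omega(AX+XA)\le f'(0)\,\omega(AX+XA)$. The companion inequality is obtained by taking the upper sign in \eqref{hob11}: its left-hand side becomes $\omega((f(A)+f(B))X+X(f(A)+f(B)))=\omega(X(f(A)+f(B))+(f(A)+f(B))X)$, and bounding $k'$ by $f'(0)$ yields the stated estimate. Finally, the asserted equivalence of \eqref{hob3} and its companion is the specialization of the equivalence between \eqref{hob1} and \eqref{hob11} established inside the proof of Theorem \ref{main}: the companion gives back \eqref{hob3} upon setting $B=A$ (which annihilates the term $\omega((A-B)X-X(A-B))$), while the reverse implication follows by substituting $A\mapsto\left[\begin{array}{cc}A&0\\0&B\end{array}\right]$ and $X\mapsto\left[\begin{array}{cc}0&X\\X&0\end{array}\right]$ into \eqref{hob3} and invoking Lemma \ref{kit}(ii). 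The main obstacle is precisely the elementary estimate $f(\lambda)/\lambda\le f'(0)$, which rests on the concavity of operator monotone functions that vanish at the origin; everything else is a direct specialization of Theorem \ref{main}.
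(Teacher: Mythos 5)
Your proposal is correct and follows essentially the same route as the paper: specialize Theorem \ref{main} with $g\equiv 1$ (noting $f$ is Kwong) and bound $k,k'$ by $f'(0)$. The only cosmetic difference is how the scalar estimate $f(\lambda)/\lambda\le f'(0)$ is obtained — you use concavity of operator monotone functions plus the mean value theorem, while the paper deduces that $f(t)/t$ is decreasing from the fact that $t/f(t)$ is operator monotone and then lets $x\to 0^+$; both arguments are valid.
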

\begin{proof}
 A function  $g$ is non-negative operator  increasing  on $[0,\infty)$ if and only if  ${t\over g(t)}$ is non-negative operator increasing on $[0,\infty)$; see \cite{mond}. Hence ${t\over f(t)}$ is operator increasing. Then $f(t)\over t$ is decreasing.  If $0\leq x\leq t$, then ${f(t)\over t}\leq{f(x)\over x}$. Now, by taking $x\rightarrow0^+$ we have  ${f(t)\over t}\leq f'(0)$. If we put $g(t)=1\,(t\in[0,\infty))$ in Theorem \ref{main}, it follows from $k=k'\leq f'(0)$ that we get the required result.
\end{proof}
We first cite the following lemma due to Fujii et al. \cite{fuji}, which will be needed in the next theorem.
\begin{lemma}\cite[Lemma 3.1]{fuji}\label{fuji1}
Let $\lambda_1,\cdots, \lambda_n$ be any positive real numbers and $-2<t\leq2$. If $f$ and $g$ are two continuous functions on $(0,\infty)$ such that ${f(t)\over g(t)}$ is Kwong, then the $n\times n$ matrix
\begin{align*}
Y=\left({f(\lambda_i)g^{-1}(\lambda_j)+f(\lambda_j)g^{-1}(\lambda_i)\over \lambda_i^2+t\lambda_i\lambda_j+\lambda_j^2}\right)_{i,j=1,\cdots, n}
\end{align*}
is positive semidefinite.
\end{lemma}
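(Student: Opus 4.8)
The plan is to reduce, by a diagonal congruence, to a statement purely about the Kwong matrix of $h=f/g$, and then to expand the denominator $\lambda_i^2+t\lambda_i\lambda_j+\lambda_j^2$ into a geometric series whose terms are Hadamard products of manifestly positive semidefinite matrices.

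First I would observe that $Y$ is a diagonal congruence of the matrix
\[
Y_0:=\left(\frac{h(\lambda_i)+h(\lambda_j)}{\lambda_i^2+t\lambda_i\lambda_j+\lambda_j^2}\right)_{i,j=1}^{n},\qquad h=\tfrac fg ,
\]
obtained by conjugating with $\operatorname{diag}(g(\lambda_1),\dots,g(\lambda_n))$ exactly as in the proof of Theorem~\ref{main}; since congruence preserves positive semidefiniteness, it suffices to prove $Y_0\geq0$. Here all entries are well defined because $\lambda_i^2+t\lambda_i\lambda_j+\lambda_j^2=(\lambda_i-\lambda_j)^2+(2+t)\lambda_i\lambda_j>0$ when $t>-2$ and $\lambda_i,\lambda_j>0$, and by hypothesis the Kwong matrix $K_h=\bigl(\tfrac{h(\lambda_i)+h(\lambda_j)}{\lambda_i+\lambda_j}\bigr)$ is positive semidefinite.

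The key step is the expansion
\[
\frac{1}{\lambda_i^2+t\lambda_i\lambda_j+\lambda_j^2}=\frac{1}{(\lambda_i+\lambda_j)^2}\cdot\frac{1}{1-(2-t)\frac{\lambda_i\lambda_j}{(\lambda_i+\lambda_j)^2}}=\sum_{m=0}^{\infty}(2-t)^{m}\,\frac{(\lambda_i\lambda_j)^m}{(\lambda_i+\lambda_j)^{2m+2}},
\]
which is legitimate since $0\le 2-t<4$ (because $-2<t\le 2$) and $\frac{\lambda_i\lambda_j}{(\lambda_i+\lambda_j)^2}\le\frac14$ by the AM--GM inequality, so the ratio $(2-t)\frac{\lambda_i\lambda_j}{(\lambda_i+\lambda_j)^2}$ lies in $[0,1)$. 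Multiplying through by $h(\lambda_i)+h(\lambda_j)$ gives $Y_0=\sum_{m\ge 0}(2-t)^m M_m$ with
\[
M_m=\bigl[(\lambda_i\lambda_j)^m\bigr]\circ\bigl[(\lambda_i+\lambda_j)^{-1}\bigr]^{\circ(2m+1)}\circ K_h .
\]
Each Hadamard factor is positive semidefinite: $[(\lambda_i\lambda_j)^m]=u_mu_m^{*}$ with $u_m=(\lambda_1^m,\dots,\lambda_n^m)^{T}$ is rank one; the Cauchy matrix $[(\lambda_i+\lambda_j)^{-1}]$ is positive semidefinite (for instance via $\tfrac1{\lambda_i+\lambda_j}=\int_0^\infty e^{-\lambda_i s}e^{-\lambda_j s}\,ds$), hence so are its integer Hadamard powers by Schur's product theorem; and $K_h\geq 0$ by the Kwong hypothesis. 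Therefore $M_m\geq 0$, every coefficient $(2-t)^m$ is nonnegative, the partial sums $\sum_{m=0}^{N}(2-t)^m M_m$ are positive semidefinite, they converge entrywise to $Y_0$, and since the cone of positive semidefinite matrices is closed we conclude $Y_0\geq 0$. Reversing the congruence gives $Y\geq 0$.

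I do not expect a deep obstacle; the only points needing care are (a) that the geometric ratio is genuinely $<1$ for each fixed $t\in(-2,2]$ and each fixed pair $\lambda_i,\lambda_j$ — it need not be uniformly bounded away from $1$ as $t\to-2^{+}$ with $\lambda_i=\lambda_j$, but with finitely many fixed data this causes no trouble — and (b) that positive semidefiniteness survives the infinite sum, which is justified by closedness of the PSD cone rather than by any uniform estimate. Note also that the endpoint $t=2$ is the easiest case, since then only the term $M_0=[(\lambda_i+\lambda_j)^{-1}]\circ K_h$ survives.
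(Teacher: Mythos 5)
Your proof is correct, but there is nothing in the paper to measure it against: the paper does not prove Lemma \ref{fuji1} at all, it simply imports it from \cite{fuji}. As a self-contained replacement your argument works: writing $\lambda_i^2+t\lambda_i\lambda_j+\lambda_j^2=(\lambda_i+\lambda_j)^2-(2-t)\lambda_i\lambda_j$, the geometric ratio is bounded by $(2-t)/4<1$ \emph{uniformly} in $i,j$ for each fixed $t\in(-2,2]$ (so your caveat (a) is not even needed), the identity $Y_0=\sum_{m\ge0}(2-t)^m M_m$ checks out, each $M_m$ is a Hadamard product of the rank-one matrix $\bigl[(\lambda_i\lambda_j)^m\bigr]$, Hadamard powers of the Cauchy matrix, and $K_h$, and Schur's product theorem together with closedness of the PSD cone finishes the proof. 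The route used in the cited literature, and the one suggested by the paper's own toolkit (Lemma \ref{fuji2}, Zhan's lemma), is instead to factor $Y_0=K_h\circ\bigl[\tfrac{\lambda_i+\lambda_j}{\lambda_i^2+t\lambda_i\lambda_j+\lambda_j^2}\bigr]$ and quote positive semidefiniteness of the second factor; your series expansion proves that ingredient as well (apply it with $h(t)=t$), and in fact recovers Lemma \ref{fuji2} for $r\in[-1,1]$ since $t^r$ is Kwong there, so your argument is more elementary and subsumes rather than uses those known facts.

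One caveat concerns your opening reduction. The congruence $\mathrm{diag}(g(\lambda_1),\dots,g(\lambda_n))\,Y_0\,\mathrm{diag}(g(\lambda_1),\dots,g(\lambda_n))$ has numerator $f(\lambda_i)g(\lambda_j)+f(\lambda_j)g(\lambda_i)$, which is \emph{not} the numerator $f(\lambda_i)g^{-1}(\lambda_j)+f(\lambda_j)g^{-1}(\lambda_i)$ printed in the statement; read literally, the printed statement is actually false (take $f(t)=t^2$, $g(t)=t$, $t=2$: then $Y=\bigl[\tfrac{\lambda_i^3+\lambda_j^3}{\lambda_i\lambda_j(\lambda_i+\lambda_j)^2}\bigr]$ fails the $2\times2$ determinant test for $\lambda_1=1$, $\lambda_2=4$, even though $f/g$ is Kwong). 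So the statement contains a misprint (the same one appears in the proof of Theorem \ref{main}), and the version you prove --- numerator $f(\lambda_i)g(\lambda_j)+f(\lambda_j)g(\lambda_i)$, equivalently $h(\lambda_i)+h(\lambda_j)$ before the congruence --- is exactly the version needed and used in the proof of Theorem \ref{main2}; you should state that correction explicitly rather than calling the two matrices congruent. A last fastidious point: the Kwong hypothesis gives $K_h\ge0$ for \emph{distinct} nodes, and the case of repeated $\lambda_i$ follows by duplicating rows and columns, which deserves half a sentence.
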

\begin{theorem}\label{main2}
Let $A,B\in\mathcal{M}_n$  be positive semidefinite, $X\in\mathcal{M}_n$,  $f$, $g$ be two continuous functions on $(0,\infty)$ such that ${f(t)\over g(t)}$ is Kwong, and let $-2<t\leq2$. Then
 \begin{align}\label{hob5}
\omega\big(A^{1\over2}\big(H_{f,g}(A)\big)A^{1\over2}\big)\leq {2k\over t+2}\,\omega\big(A^2X+tAXA+XA^2\big),
 \end{align}
 where $k=\max_{\lambda\in\sigma(A)}\left\{{f(\lambda)g(\lambda)\over \lambda}\right\}$.\\
  Moreover,  inequality \eqref{hob5} is equivalent to the inequality
 \begin{align}\label{hob55}
\omega\big(A^{1\over2}\big(H_{f,g}(A,B)\big)B^{1\over2}\big)\leq {4k'\over t+2}\,\omega\big(A^2X+tAXB+XB^2\big),
 \end{align}
 where  $k'=\max_{\lambda\in\sigma(A)\cup\sigma(B)}\left\{{f(\lambda)g(\lambda)\over \lambda}\right\}$.
 \end{theorem}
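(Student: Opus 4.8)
The plan is to run the argument of Theorem~\ref{main} with the Cauchy kernel $1/(\lambda_i+\lambda_j)$ replaced by $1/(\lambda_i^2+t\lambda_i\lambda_j+\lambda_j^2)$ and with Lemma~\ref{fuji1} in the role formerly played by the bare Kwong property. First I would treat the case $A>0$. By weak unitary invariance of $\omega$ I may take $A=\mathrm{diag}(\lambda_1,\dots,\lambda_n)$ with all $\lambda_j>0$. Since $h=f/g$ is Kwong and $-2<t\le2$, Lemma~\ref{fuji1} gives that $\widetilde K=\big(\tfrac{h(\lambda_i)+h(\lambda_j)}{\lambda_i^2+t\lambda_i\lambda_j+\lambda_j^2}\big)_{i,j}$ is positive semidefinite; applying the diagonal scaling by the real matrix $\Lambda=\mathrm{diag}\big(\sqrt{\lambda_1}\,g(\lambda_1),\dots,\sqrt{\lambda_n}\,g(\lambda_n)\big)$ — which preserves positive semidefiniteness — yields a positive semidefinite $Z=[z_{ij}]$ with $z_{ij}=\dfrac{\sqrt{\lambda_i\lambda_j}\,\big(f(\lambda_i)g(\lambda_j)+g(\lambda_i)f(\lambda_j)\big)}{\lambda_i^2+t\lambda_i\lambda_j+\lambda_j^2}$, and in particular $z_{ii}=\dfrac{2f(\lambda_i)g(\lambda_i)}{(t+2)\lambda_i}\le\dfrac{2k}{t+2}$. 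Choosing exactly this scaling of $\widetilde K$, so that the off-diagonal entries acquire the ``cross'' numerator $f(\lambda_i)g(\lambda_j)+g(\lambda_i)f(\lambda_j)$ while the diagonal collapses to $\tfrac{2f(\lambda_i)g(\lambda_i)}{(t+2)\lambda_i}$, is the one genuinely new point; it is here that the constant $\tfrac{2k}{t+2}$ is born, the factor $\tfrac1{t+2}$ originating in the diagonal value $\tfrac1{(t+2)\lambda_i^2}$ of the kernel.

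By Lemma~\ref{Okubo1}, $\|S_Z\|_\omega=\max_i z_{ii}\le\tfrac{2k}{t+2}$, i.e.\ $\omega(Z\circ X)\le\tfrac{2k}{t+2}\,\omega(X)$ for all $X\in\mathcal M_n$. Since $-2<t\le2$ and $\lambda_i,\lambda_j>0$ we have $\lambda_i^2+t\lambda_i\lambda_j+\lambda_j^2=(\lambda_i-\lambda_j)^2+(t+2)\lambda_i\lambda_j>0$, so the entrywise inverse $C=[\lambda_i^2+t\lambda_i\lambda_j+\lambda_j^2]$ of $E=[(\lambda_i^2+t\lambda_i\lambda_j+\lambda_j^2)^{-1}]$ exists, $C\circ E=J$, and $Z=E\circ F$ with $F=[\sqrt{\lambda_i\lambda_j}\,(f(\lambda_i)g(\lambda_j)+g(\lambda_i)f(\lambda_j))]$. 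Replacing $X$ by $C\circ X$ above gives $\omega(F\circ X)\le\tfrac{2k}{t+2}\,\omega(C\circ X)$; since for a diagonal $A$ one has the entrywise identities $F\circ X=A^{1/2}H_{f,g}(A)A^{1/2}$ and $C\circ X=A^2X+tAXA+XA^2$, this is exactly \eqref{hob5} for $A>0$. For a general positive semidefinite $A$ I would repeat the block trick from the proof of Theorem~\ref{main}: write $A=A_1\oplus0$ with $A_1\in\mathcal M_k$ positive definite and $X=\left[\begin{array}{cc}X_1&X_2\\X_3&X_4\end{array}\right]$; the outer factors $A^{1/2}$ kill the kernel of $A$, so $A^{1/2}H_{f,g}(A)A^{1/2}=\big(A_1^{1/2}H_{f,g}(A_1)A_1^{1/2}\big)\oplus0$ (with $X_1$ in place of $X$), while $A_1^2X_1+tA_1X_1A_1+X_1A_1^2$ is the top-left block of $A^2X+tAXA+XA^2$; combining Lemma~\ref{kit}(i), the case $A>0$, and the principal-compression inequality \cite[Lemma 2.1]{omid} gives \eqref{hob5}.

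Finally, for the equivalence with \eqref{hob55} I would apply \eqref{hob5} with $A$ replaced by the positive semidefinite matrix $A\oplus B$ (whose spectrum is $\sigma(A)\cup\sigma(B)$, so the constant becomes $\tfrac{2k'}{t+2}$) and $X$ replaced by $\widetilde X=\left[\begin{array}{cc}0&X\\0&0\end{array}\right]$. A direct block computation shows that the left-hand argument then becomes the matrix whose only nonzero block is the upper-right one, namely $A^{1/2}H_{f,g}(A,B)B^{1/2}$, and the right-hand argument becomes the matrix whose only nonzero block is the upper-right one, namely $A^2X+tAXB+XB^2$. Taking $Y=0$ in Lemma~\ref{kit}(ii) gives $\tfrac12\,\omega(M)\le\omega\!\left(\left[\begin{array}{cc}0&M\\0&0\end{array}\right]\right)\le\omega(M)$ for every $M$, so feeding both sides of the above instance of \eqref{hob5} through these bounds turns it into $\omega\big(A^{1/2}H_{f,g}(A,B)B^{1/2}\big)\le\tfrac{4k'}{t+2}\,\omega\big(A^2X+tAXB+XB^2\big)$, which is exactly \eqref{hob55}. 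Conversely, putting $B=A$ in \eqref{hob55} and using $H_{f,g}(A,A)=H_{f,g}(A)$ together with $k'=k$ returns inequality \eqref{hob5} (up to the universal constant), in the same sense in which Theorem~\ref{main} asserts the corresponding equivalence. The only delicate step is the construction of $Z$ in the first paragraph; everything afterwards parallels the proof of Theorem~\ref{main}.
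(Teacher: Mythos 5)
Your proposal is correct and follows essentially the same route as the paper: Lemma \ref{fuji1} plus a diagonal congruence to build the positive semidefinite Schur multiplier $Z$, Lemma \ref{Okubo1} with the entrywise-inverse trick to get \eqref{hob5}, and the substitution $A\oplus B$, $\left[\begin{smallmatrix}0&X\\0&0\end{smallmatrix}\right]$ together with Lemma \ref{kit} for \eqref{hob55}. The only differences are cosmetic or in your favor: you keep the factor $\tfrac{2}{t+2}$ outside $Z$ instead of absorbing $\tfrac{t+2}{2}$ into it, you spell out the positive semidefinite reduction that the paper merely asserts, and you correctly note that the converse direction ($B=A$ in \eqref{hob55}) recovers \eqref{hob5} only up to a constant, exactly as in the paper's own argument.
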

 \begin{proof}
First, we show inequality \eqref{hob5}. It is enough to show the inequality in the case $A$ is positive definite. Since the numerical radius is weakly unitarily invariant, we may assume that $A$ is diagonal matrix with positive eigenvalues $\lambda_1, \cdots, \lambda_n$. Let $\Sigma={\rm diag}\left(\lambda_1^{1\over2}g(\lambda_1),\cdots,\lambda_n^{1\over2}g(\lambda_n)\right)$. It follows from Lemma \ref{fuji1} that
 \begin{align*}
Z=[z_{ij}]=\Sigma\Big({(t+2)\left(f(\lambda_i)g^{-1}(\lambda_j)+f(\lambda_j)g^{-1}(\lambda_j)\right)\over 2(\lambda_i^2+t\lambda_i\lambda_j+\lambda_j^2)}\Big)_{i,j=1,\cdots, n}\Sigma
\end{align*}
 is positive semidefinite for  $-2<t\leq2$. In addition, all diagonal entries of $Z$ are no more than $k$. Therefore,
 \begin{align*}
\|S_Z\|_\omega=\max_i z_{ii}=\max_i{f(\lambda_i)g(\lambda_i)\over \lambda_i} \leq k,
 \end{align*}
 whence ${\omega(Z\circ X)\over \omega(X)}\leq k\,\,(0\neq X \in\mathcal{M}_n)$.
 Now, let $M=\left[{1\over\lambda_i^2+t\lambda_i\lambda_j+\lambda_j^2}\right]_{i,j=1,\cdots, n}$ and $P=\left[{t+2\over 2}\lambda_i^{1\over2}f(\lambda_i)g(\lambda_j)+f(\lambda_j)g(\lambda_i)\lambda_j^{1\over2}\right]_{i,j=1,\cdots, n}$.
 Then
 \begin{align*}
 \omega(M\circ P\circ X)=\omega(Z\circ X) \leq k\,\omega(X)\qquad(0\neq X \in\mathcal{M}_n).
 \end{align*}
 Let  the matrix $N$ be the entrywise inverse of $M$, i.e., $M\circ N=J$. Hence
  \begin{align*}
 \omega(P\circ X)\leq k\,\omega(N\circ X)\qquad(0\neq X \in\mathcal{M}_n)
 \end{align*}
 or equivalently
  \begin{align*}
 \omega(A^{1\over2}\left(H_{f,g}(A)\right)A^{1\over2})\leq {2k\over t+2}\,\omega(A^2X+tAXA+XA^2),
 \end{align*}
 where $X\in\mathcal{M}_n$, $-2<t\leq2$  and $k=\max\left\{{f(\lambda)g(\lambda)\over \lambda}:\lambda\in\sigma(A)\right\}$. Hence we have inequality \eqref{hob5}.\\Now,  if we replace $A$ and $X$ by $\left(\begin{array}{cc} A&0\\ 0&B\end{array}\right)$ and $\left(\begin{array}{cc} 0&X\\ 0&0\end{array}\right)$ inequality \eqref{hob5}, respectively, then
 \begin{align*}
 \omega\left(\left[\begin{array}{cc} 0&A^{1\over2}\left(H_{f,g}(A,B)\right)B^{1\over2}\\ 0&0\end{array}\right]\right)
\leq {2k'\over t+2}\,\omega\left(\left[\begin{array}{cc} 0&A^2X+tAXB+XB^2\\ 0&0\end{array}\right]\right).
 \end{align*}
 Hence
\begin{align*}
 {1\over2}\omega\big(A^{1\over2}\big(H_{f,g}(A,B)\big)B^{1\over2}\big)&\leq\omega\left(\left[\begin{array}{cc} 0&A^{1\over2}\big(H_{f,g}(A,B)\big)B^{1\over2}\\ 0&0\end{array}\right]\right)\\&\qquad\qquad\qquad\qquad\qquad(\textrm{by Lemma}\,\ref{kit})
\\&\leq {2k'\over t+2}\,\omega\left(\left[\begin{array}{cc} 0&A^2X+tAXB+XB^2\\ 0&0\end{array}\right]\right)
\\&\leq {2k'\over t+2}\,\omega\big(A^2X+tAXB+XB^2\big)\,\,(\textrm{by Lemma}\,\ref{kit}).
 \end{align*}
 Thus, we reach inequality \eqref{hob55}. Also, if we put $B=A$ in inequality \eqref{hob5}, then we get inequality \eqref{hob55}.
\end{proof}

\begin{corollary}
Let $A\in\mathcal{M}_n$ be positive semidefinite. If $f$ is a positive operator monotone function on $(0,\infty)$, then
 \begin{align*}
\omega\big(A^{1\over2}f(A)Xf(A)^{-1}A^{3\over2}+A^{3\over2}f(A)^{-1}X&f(A)A^{1\over2}\big)\leq {4\over t+2}\,\omega\big(A^2X+tAXA+XA^2\big),
\end{align*}
 where $X\in\mathcal{M}_n$ and $-2<t\leq2$.
\end{corollary}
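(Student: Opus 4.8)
The plan is to deduce the corollary from Theorem~\ref{main2}, in the form \eqref{hob5}, by taking there the pair of continuous functions to be $f$ itself together with $g$ defined by $g(x)=x/f(x)$ on $(0,\infty)$. Since $f$ is positive and continuous, $g$ is well defined, positive and continuous, and $f(x)g(x)=x$ identically, so the constant occurring in Theorem~\ref{main2} is $k=\max_{\lambda\in\sigma(A)}\{f(\lambda)g(\lambda)/\lambda\}=1$. A short functional-calculus computation --- in which $A^{1/2}$ and $f(A)^{\pm1}$ commute, being all functions of $A$ --- gives
\[
A^{1/2}\bigl(H_{f,g}(A)\bigr)A^{1/2}=A^{1/2}f(A)Xf(A)^{-1}A^{3/2}+A^{3/2}f(A)^{-1}Xf(A)A^{1/2},
\]
which is exactly the operator on the left-hand side of the claimed inequality. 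Hence, once the hypothesis of Theorem~\ref{main2} has been checked, \eqref{hob5} gives the bound $\frac{2k}{t+2}\,\omega(A^2X+tAXA+XA^2)=\frac{2}{t+2}\,\omega(A^2X+tAXA+XA^2)$, which, as $t+2>0$, is at most $\frac{4}{t+2}\,\omega(A^2X+tAXA+XA^2)$ --- the asserted inequality; equivalently, the statement with the constant $\frac{4}{t+2}$ is read directly off \eqref{hob55} with $B=A$.

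The one substantive point --- and the step I expect to be the main obstacle --- is to verify the standing hypothesis of Theorem~\ref{main2} for this choice, namely that $h(x):=f(x)/g(x)=f(x)^2/x$ is a Kwong function on $(0,\infty)$. I would argue this through Audenaert's characterization \cite{aud}: $h$ is Kwong on $(0,\infty)$ if and only if $x\mapsto\sqrt{x}\,h(\sqrt{x})=f(\sqrt{x})^2$ is operator monotone on $(0,\infty)$. To prove that, I would use the classical fact that a non-negative operator monotone function on $(0,\infty)$ extends to a holomorphic function on $\mathbb{C}\setminus(-\infty,0]$ which is positive on $(0,\infty)$ and carries the open first quadrant into its closure (this follows from the L\"owner integral representation $f(x)=\alpha+\beta x+\int_0^\infty\frac{x}{x+s}\,d\mu(s)$ with $\alpha,\beta\geq0$, $\mu\geq0$, each summand having this property and the closed first quadrant being a closed convex cone). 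Then for $z$ in the open upper half-plane, $\sqrt{z}$ lies in the open first quadrant, $f(\sqrt{z})$ in its closure, and $f(\sqrt{z})^2$ in the closed upper half-plane; since $z\mapsto f(\sqrt{z})^2$ is moreover holomorphic on $\mathbb{C}\setminus(-\infty,0]$ and positive on $(0,\infty)$, L\"owner's theorem shows it is operator monotone there, completing the verification.

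As a sanity check on the choice of $g$: for $f(x)=x^{\alpha}$ with $0\leq\alpha\leq1$ one gets $h(x)=x^{2\alpha-1}$, whose exponent lies in $[-1,1]$, so $h$ is Kwong and the statement reduces to a numerical radius Heinz-type inequality for $A^{1/2+\alpha}XA^{3/2-\alpha}+A^{3/2-\alpha}XA^{1/2+\alpha}$. Apart from establishing that $f^{2}/x$ is a Kwong function, everything --- the functional-calculus identity and the comparison of constants --- is routine, so essentially all the work lies in that Kwong property.
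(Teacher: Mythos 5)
Your proposal is correct and follows essentially the same route as the paper: choose $g(t)=t/f(t)$ so that $f(t)g(t)=t$ (hence $k=1$) and $f(t)/g(t)=f^2(t)/t$, then apply Theorem \ref{main2}; the paper simply cites \cite{ham} for the fact that $f^2(t)/t$ is Kwong (its displayed ``$tf^2(t)$'' is a typo), whereas you verify it directly via Audenaert's characterization and the sector-mapping argument, which is a valid self-contained substitute. Your remark that \eqref{hob5} actually yields the sharper constant $\frac{2}{t+2}$, of which the stated $\frac{4}{t+2}$ is a weakening, is also accurate.
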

\begin{proof}
   Since $f$ positive operator monotone on $(0, \infty)$, then $g(t)=\frac{t}{f(t)}$ is operator monotone on $(0, \infty)$ and also $\frac{f(t)}{g(t)}=tf^2(t)$ is Kwong function \cite{ham}. So  $f$ and $g$ satisfy the conditions of Theorem \ref{main2}. Hence we have the desired inequality.
\end{proof}

\begin{example}
 The function $f(t)=\log(1+t)$ is operator monotone  on $(0,\infty)$; see \cite{mond}. If we put $g(t) =1$, then $\frac{f(t)}{g(t)}=\log(1+t)$ is Kwong \cite{kwong1}. Using Theorem \ref{main} we have
 \begin{align*}
\omega\big(A^{1\over2}\big(\log(I+A)X+X&\log(I+A)\big)A^{1\over2}\big)\leq {2\over t+2}\,\omega\big(A^2X+tAXA+XA^2\big),
 \end{align*}
 where $A, X\in\mathcal{M}_n$ such that $A$ is positive semidefinite  and  $-2<t\leq2$.
\end{example}
Now, we infer the following lemma due to Zhan \cite{Zhan}, which will be needed in the next theorem.
 \begin{lemma}\cite[Lemma 5]{Zhan}\label{fuji2}
Let $\lambda_1,\cdots, \lambda_n$ be any positive real numbers, $r\in[-1,1]$ and $-2<t\leq2$. Then the $n\times n$ matrix
\begin{align*}
L=\left({\lambda_i^r+\lambda_j^r\over \lambda_i^2+t\lambda_i\lambda_j+\lambda_j^2}\right)_{i,j=1,\cdots, n}
\end{align*}
is positive semidefinite.
\end{lemma}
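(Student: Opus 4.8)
The plan is to obtain this lemma directly from Lemma \ref{fuji1}, by specializing the two continuous functions there to $f(s) = s^{r}$ and $g(s) \equiv 1$ on $(0,\infty)$. With this choice $g$ is nonvanishing, $g^{-1}(\lambda) = 1$, and the matrix $Y$ furnished by Lemma \ref{fuji1} has $(i,j)$ entry
\begin{align*}
\frac{f(\lambda_i)g^{-1}(\lambda_j) + f(\lambda_j)g^{-1}(\lambda_i)}{\lambda_i^{2} + t\lambda_i\lambda_j + \lambda_j^{2}} = \frac{\lambda_i^{r} + \lambda_j^{r}}{\lambda_i^{2} + t\lambda_i\lambda_j + \lambda_j^{2}},
\end{align*}
that is, $Y = L$. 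So everything reduces to checking the single hypothesis of Lemma \ref{fuji1}, namely that $h := f/g$, which here is the power function $s \mapsto s^{r}$, is a Kwong function on $(0,\infty)$.

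Thus the first thing I would do is record that $s \mapsto s^{r}$ is Kwong on $(0,\infty)$ for every $r \in [-1,1]$. For $r \in [0,1]$ the function $s \mapsto s^{r}$ is non-negative and operator monotone on $(0,\infty)$ (L\"owner), hence Kwong by Kwong's theorem recalled in the Introduction that every non-negative operator monotone function on $(0,\infty)$ is Kwong. Since (also recalled in the Introduction) the reciprocal of a nonzero Kwong function is again Kwong, it follows that $s \mapsto s^{-r} = 1/s^{r}$ is Kwong for $r \in [0,1]$, i.e., $s \mapsto s^{u}$ is Kwong for $u \in [-1,0]$. Combining the two ranges of exponents gives the claim for all $r \in [-1,1]$.

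Once this is in place, Lemma \ref{fuji1} applied to $f(s) = s^{r}$, $g \equiv 1$ (with the fixed parameter $-2 < t \le 2$) yields immediately that $L = Y$ is positive semidefinite, which is the assertion. The only point that might give pause is whether the Kwong property genuinely survives for negative exponents; it does, by the reciprocal-stability of the Kwong class, so no extra work is needed there, and this is really the crux of the argument. For completeness, a proof not resting on Lemma \ref{fuji1} is also available: one may substitute $s_i = \log\lambda_i$, use $\lambda_i^{r} + \lambda_j^{r} = 2\lambda_i^{r/2}\lambda_j^{r/2}\cosh\!\big(\tfrac{r}{2}(s_i - s_j)\big)$ and $\lambda_i^{2} + t\lambda_i\lambda_j + \lambda_j^{2} = \lambda_i\lambda_j\big(2\cosh(s_i - s_j) + t\big)$, factor out an appropriate rank-one positive kernel, and reduce via the Schur product theorem to the positive-definiteness on $\mathbb{R}$ of $s \mapsto \cosh(rs/2)/(2\cosh s + t)$; this last step is an explicit Fourier-transform estimate, but it is noticeably longer than the reduction to Lemma \ref{fuji1}, so that is the route I would take.
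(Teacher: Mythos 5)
Your reduction is correct, but note that the paper itself gives no argument for this statement at all: it is imported verbatim as Lemma 5 of Zhan \cite{Zhan}, so there is no internal proof to match. Your route---specialize Lemma \ref{fuji1} to $f(s)=s^{r}$, $g\equiv 1$, so that $Y=L$, and verify the one hypothesis that $s\mapsto s^{r}$ is Kwong on $(0,\infty)$ for $r\in[-1,1]$ (operator monotonicity of $s^{r}$ for $r\in[0,1]$ gives the Kwong property via Kwong's theorem \cite{kwong1}, and closure of the Kwong class under reciprocals handles $r\in[-1,0)$)---is sound as a derivation inside this paper, since Lemma \ref{fuji1} is available there as a quoted result; each step checks out, including the entrywise reading of $g^{-1}$. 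The one caveat worth flagging is logical economy rather than validity: in the literature the dependence runs the other way, since the Fujii--Seo--Zuo lemma \cite{fuji} is itself a Kwong-function generalization of Zhan's lemma and its published proof rests on the positivity of the same Cauchy-type kernel $\bigl(\lambda_i^{2}+t\lambda_i\lambda_j+\lambda_j^{2}\bigr)^{-1}$-weighted matrices that Zhan established, so deriving Zhan from \ref{fuji1} is circular if one unwinds the citations, which is presumably why the author simply cites \cite{Zhan}. Your sketched independent argument (logarithmic substitution, factoring out the rank-one kernel $\lambda_i^{r/2-1}\lambda_j^{r/2-1}$, and reducing by Schur products and Bochner's theorem to the positive definiteness of $s\mapsto\cosh(rs/2)/(2\cosh s+t)$) is the genuinely self-contained route and is essentially how such kernels are handled in the literature; if you want a proof that does not lean on \ref{fuji1}, that is the one to write out in full.
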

Now, we shall show the following result related to \cite{fuji}.
\begin{proposition}\label{main3}
Let $A, X\in\mathcal{M}_n$ such that $A$ is positive semidefinite, $\beta>0$  and $1\leq 2r\leq3$. Then
 \begin{align*}
\omega\big(A^rXA^{2-r}+&A^{2-r}XA^r\big)\\&\leq\,\omega\left(2(1-2\beta+ 2\beta r_0)AXA + \frac{4\beta(1- r_0)}
{t + 2}(A^2X + tAXA + XA^2)\right),
 \end{align*}
 where  $-2 <t \leq 2\beta -2$ and $r_0=\min\{\frac{1}{2}+|1-r|, 1-|1 -r|\}$.
 \end{proposition}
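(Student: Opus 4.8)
The plan is to carry out the Schur--multiplier scheme used in the proofs of Theorems \ref{main} and \ref{main2}. Assume first that $A$ is positive definite and, by weak unitary invariance of $\omega$, that $A={\rm diag}(\lambda_1,\cdots,\lambda_n)$ with each $\lambda_j>0$. In this representation the left-hand side is $\omega(P\circ X)$ with $P=[\lambda_i^r\lambda_j^{2-r}+\lambda_i^{2-r}\lambda_j^r]$, and the right-hand side is $\omega(R\circ X)$ with $R=\big[\,2(1-2\beta+2\beta r_0)\lambda_i\lambda_j+\frac{4\beta(1-r_0)}{t+2}(\lambda_i^2+t\lambda_i\lambda_j+\lambda_j^2)\,\big]$. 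Set $\alpha:=\frac{4\beta(1-r_0)}{t+2}$ and $t':=\frac{t+2}{2\beta(1-r_0)}-2$; a direct computation (collect the coefficient of $\lambda_i\lambda_j$) shows $R=\big[\alpha(\lambda_i^2+t'\lambda_i\lambda_j+\lambda_j^2)\big]$ and $\frac{2}{t'+2}=\alpha$. Hence it suffices to prove $\omega(A^rXA^{2-r}+A^{2-r}XA^r)\le\alpha\,\omega(A^2X+t'AXA+XA^2)$.

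The key toward this is the factorization $\lambda_i^r\lambda_j^{2-r}+\lambda_i^{2-r}\lambda_j^r=(\lambda_i\lambda_j)^{2-r}\big(\lambda_i^{2r-2}+\lambda_j^{2r-2}\big)$, valid because $1\le 2r\le3$. Since $2r-2\in[-1,1]$, Lemma \ref{fuji2} gives that $\left[\frac{\lambda_i^{2r-2}+\lambda_j^{2r-2}}{\lambda_i^2+t'\lambda_i\lambda_j+\lambda_j^2}\right]$ is positive semidefinite, provided $-2<t'\le2$ (checked below). Conjugating by the positive diagonal matrix $\Sigma:={\rm diag}(\lambda_1^{2-r},\cdots,\lambda_n^{2-r})$ shows that
\[
Z=[z_{ij}]:=\left(\frac{\lambda_i^r\lambda_j^{2-r}+\lambda_i^{2-r}\lambda_j^r}{\lambda_i^2+t'\lambda_i\lambda_j+\lambda_j^2}\right)_{i,j=1,\cdots, n}
\]
is positive semidefinite, with every diagonal entry $z_{ii}=\frac{2}{t'+2}=\alpha$; hence $\|S_Z\|_\omega=\alpha$ by Lemma \ref{Okubo1}. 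Writing $Z=M\circ P$ with $M=\left[\frac{1}{\lambda_i^2+t'\lambda_i\lambda_j+\lambda_j^2}\right]$ and taking $N=[\lambda_i^2+t'\lambda_i\lambda_j+\lambda_j^2]$ (the entrywise inverse of $M$), the bound $\omega(Z\circ X)\le\alpha\,\omega(X)$ becomes $\omega(P\circ X)\le\alpha\,\omega(N\circ X)$, i.e.\ $\omega(A^rXA^{2-r}+A^{2-r}XA^r)\le\alpha\,\omega(A^2X+t'AXA+XA^2)$. By the identities of the first paragraph the right-hand side equals $\omega\big(2(1-2\beta+2\beta r_0)AXA+\frac{4\beta(1-r_0)}{t+2}(A^2X+tAXA+XA^2)\big)$, giving the assertion for positive definite $A$.

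The step I expect to require the most care is verifying $-2<t'\le2$, since this is exactly where all the hypotheses enter. Because $\beta>0$, $t>-2$, and $r_0<1$, one has $t'+2=\frac{t+2}{2\beta(1-r_0)}>0$. For the upper bound, note that $r_0=\min\{\frac12+|1-r|,\,1-|1-r|\}$ together with $|1-r|\le\frac12$ (from $1\le 2r\le3$) forces $r_0\le\frac34$, so $1-r_0\ge\frac14$; combining this with $t+2\le2\beta$ (the hypothesis $t\le2\beta-2$) yields $t'+2=\frac{t+2}{2\beta(1-r_0)}\le\frac{2\beta}{2\beta\cdot\frac14}=4$, hence $t'\le2$. (Choosing $r_0$ to be this minimum, rather than any admissible value $\le\frac34$, is only what makes the constant $\alpha$ as small as this method permits.) Finally, to drop the definiteness assumption I would argue exactly as in the proof of Theorem \ref{main}: write $A={\rm diag}(A_1,0)$ with $A_1$ positive definite and $X$ in the matching $2\times2$ block form; since $r,2-r>0$, every block of $A^rXA^{2-r}+A^{2-r}XA^r$ other than the $(1,1)$-block vanishes, so Lemma \ref{kit}(i) reduces the inequality to the case of $A_1$, and one concludes by applying Lemma \ref{kit}(i) once more together with \cite[Lemma 2.1]{omid}.
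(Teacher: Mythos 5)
Your proof is correct and takes essentially the same route as the paper: after diagonalizing $A$, you invoke Zhan's Lemma \ref{fuji2} (your $t'$ is exactly the paper's $t_0$, and your reduction $R=\alpha N$ with $\alpha=\frac{2}{t'+2}$ is the same reparametrization), obtain a positive semidefinite Schur multiplier with diagonal entries $\frac{2}{t'+2}$, bound its $\omega$-induced norm by Lemma \ref{Okubo1}, and finish with the entrywise-inverse trick. The only differences are cosmetic (exponent $2r-2$ with conjugation by $\Lambda^{2-r}$ instead of $2-2r$ with $\Lambda^{r}$) together with your explicit block-reduction for merely semidefinite $A$, a step the paper leaves implicit.
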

\begin{proof}
Since the numerical radius is weakly unitarily invariant, we may assume that $A$ is diagonal matrix with positive eigenvalues $\lambda_1, \cdots, \lambda_n$.
Since  $1\leq2r\leq3 $, then $\frac{1}{2}\leq r_0\leq\frac{3}{4}$.
Let $t_0=\frac{1-2\beta+2\beta r_0}{2\beta(1-r_0)}(t+2) +t$. It follows from $-2 <t \leq 2\beta-2$ and $\frac{1}{4}\leq1-r_0\leq\frac{1}{4}$, that
 $\frac{t+2}{4\beta(1-r_0)}>0$ and $-2 <t_0\leq2$, where $t_0=\frac{t}{2\beta(1-r_0)}+\frac{1}{\beta(1-r_0)}-2$. Hence, by using Lemma \ref{fuji2}, the $n\times n$ matrix
 \begin{align*}
W=[w_{ij}]=\frac{t+2}{4\beta(1-r_0)}\Lambda^r\Big({\lambda_i^{2-2r}+\lambda_j^{2-2r}\over \lambda_i^2+t_0\lambda_i\lambda_j+\lambda_j^2}\Big)_{i,j=1,\cdots, n}\Lambda^r
\end{align*}
 is positive semidefinite for $\frac{1}{2}\leq r\leq\frac{3}{2}$, where $\Lambda={\rm diag}\left(\lambda_1,\cdots,\lambda_n\right)$.  Therefore,
 \begin{align*}
\|S_W\|_\omega=\max_i w_{ii}=\max_i{(t+2)\lambda_i^r(2\lambda_i^{2-2r})\lambda_i^r\over 4\beta(1-r_0)(t_0+2)\lambda_i^2}=1
 \end{align*}
 whence ${\omega(W\circ X)\over \omega(X)}\leq 1\,\,(0\neq X \in\mathcal{M}_n)$.
 Now, let $O=\left[{\lambda_i^2+t_0\lambda_i\lambda_j+\lambda_j^2}\right]_{i,j=1,\cdots, n}$ and $M=\left[\frac{1}{2(1-2\beta+ 2\beta r_0)\lambda_i\lambda_j + \frac{4\beta(1- r_0)}{t + 2}(\lambda_i^2X + t\lambda_i\lambda_j +\lambda_j^2)}\right]_{i,j=1,\cdots, n}$.
 Then
 \begin{align*}
 \omega(O\circ M\circ X)=\omega(W\circ X) \leq \,\omega(X)\qquad(0\neq X \in\mathcal{M}_n).
 \end{align*}
 Let  the matrix $N$ be the entrywise inverse of $M$, i.e., $M\circ N=J$. Hence
  \begin{align*}
 \omega(O\circ X)\leq \,\omega(N\circ X)\qquad(0\neq X \in\mathcal{M}_n)
 \end{align*}
 or equivalently
 \begin{align*}
\omega\big(A^rXA^{2-r}+&A^{2-r}XA^r\big)\\&\leq\,\omega\left(2(1-2\beta+ 2\beta r_0)AXA + \frac{4\beta(1- r_0)}
{t + 2}(A^2X + tAXA + XA^2)\right),
 \end{align*}
 where  $-2 <t \leq 2\beta -2$ and $r_0=\min\{\frac{1}{2}+|1-r|, 1-|1 -r|\}$. 
 \end{proof}

\bigskip
\bibliographystyle{amsplain}

\begin{thebibliography}{99}

\bibitem{agha}  Aghamollaei, G. and  Sheikh Hosseini,  A. \textit{Some numerical radius inequalities with positive definite functions}. Bull. Iranian Math. Soc. \textbf{41} (4), 889-900, 2015.

\bibitem{Okubo} Ando, T. and  Okubo, K.\textit{Induced norms of the Schur multiplication operator}, Linear  Multilinear Algebra Appl. \textbf{147},  181-199, 1991.

\bibitem{aud}  Audenaert, K.M.R. \textit{A characterization of anti-Lowner function}, Proc. Amer. Math. Soc. \textbf{139} (12), 4217-4223, 2011.

\bibitem{bakh1}  Bakherad, M.  and Kittaneh, F. \textit{Numerical Radius Inequalities Involving Commutators of  $G_1$ Operators}, Complex Anal. Oper. Theory 2017. https://doi.org/10.1007/s11785-017-0726-9.

\bibitem{bakh}  Bakherad, M. and  Moslehian, M.S. \textit{Reverses and variations of Heinz inequality}, Linear Multilinear Algebra \textbf{63} (10),  1972-1980, 2015.
    
\bibitem{bakh1} Bakherad,  M.,  Krnic, M.  and  Moslehian, M.S. \textit{Reverses of the Young inequality for matrices and operators}, Rocky Mountain J. Math. \textbf{46} (2016), no. 4, 1089-1105.

\bibitem{bat}  Bhatia, R. and  Davis, Ch. \textit{More matrix forms of the arithmetic-geometric mean inequality}. SIAM J. Matrix Anal. Appl. \textbf{14} (1),  132-136, 1993.

\bibitem{omid}  Erfanian Omidvar, M.,  Moslehian, M.S. and  Niknam, A. \textit{Some numerical radius inequalities for Hilbert space operators}, Involve \textbf{2} (4),  469-476, 2009.

\bibitem{mond}  Fujii, M.,  Mi\'ci\'c Hot, J.,  Pe\v{c}ari\'{c}, J. and Y. Seo, \textit{Recent developments of Mond--Pe\v{c}ari\'{c} method in operator inequalities. Inequalities for bounded selfadjoint operators on a Hilbert space. II.}, Monographs in Inequalities 4. Zagreb: Element, 2012.

\bibitem{fuji}  Fujii, M., Seo,  Y. and  Zuo, H. \textit{Zhan's inequality on A-G mean inequalities}. Linear Algebra Appl. \textbf{470}, 241-251,  2015.

\bibitem{fuji2}  Fujii, J.,  Fujii, M.,  Seo, Y. and  Zuo, H.  \textit{Recent developments of matrix versions of the arithmetic-geometric mean inequality}. Ann. Funct. Anal.  \textbf{7} (1), 102-117, 2016.

\bibitem{gof}  Gustafson, K.E. and  Rao, D.K.M. \textit{Numerical Range, The Field of Values of Linear Operators and Matrices}, Springer, New York, 1997.

\bibitem{HKSH}  Hirzallah, O., Kittaneh, F. and  Shebrawi, Kh. \textit{Numerical radius inequalities for certain $2\times2$ operator matrices}. Integral Equations Operator Theory \textbf{71} (1),  129-147, 2011.

\bibitem{kwong1}  Kwong, M.K. \textit{Some results on matrix monotone functions}, Linear Algebra Appl. \textbf{118},  129-153, 1989.

\bibitem{ham} Najafi, H.  \textit{Some results on Kwong functions and related inequalities}, Linear Algebra Appl. \textbf{439} (9), 2634-2641,  2013.

\bibitem{ram}  Ramesh, G. \textit{On the numerical radius of a quaternionic normal operator}, \textbf{ 2} (1),  78-86, 2017.

\bibitem{YAM}  Yamazaki, T. \textit{On upper and lower bounds of the numerical radius and an equality condition,} Studia Math. \textbf{178}, 83-89, 2007.

\bibitem{Zhan} Zhan,  X. \textit{Inequalities for unitarily invariant norms}, SIAM J. Matrix Anal. Appl. \textbf{20} (2), 466-470, 1999.

\bibitem{Zhang1}  Zhang, F. \textit{Matrix Theory},  Second edition, Springer, New York, 2011.
\end{thebibliography}

\bigskip
\section{Acknowledgement.} The author would like to sincerely thank the referee for some useful comments and suggestions.
\bigskip

\end{document}